\newtheorem{theorem}{Theorem}[section]
\newtheorem{prop}{Proposition}[section]
\newtheorem{lemma}[theorem]{Lemma}
\newtheorem{remark}[theorem]{Remark}
\numberwithin{equation}{section}
\newcommand{{\tlg}}{\tilde\gamma }
\newcommand{{\tlG}}{\tilde\Gamma }
\newcommand {{\tlJ}}{\tilde J }
\def\slash#1{\, /\kern-0.6em{#1}}
\begin{document}

\title{Geometric Analysis on Path spaces}

\author{Pradip Kumar}
\address{{\rm Pradip Kumar}\\ Department of Mathematics,
  Harish Chandra Research Institute\\ Allahabad 211019, Uttarpradesh\\ India} \email{pmishra@hri.res.in}

\title{Geometric Analysis on Path spaces}


\subjclass[2010]{58D15,58B99}

\keywords{Path space, Complete normable manifold, manifold of mappings}

\begin{abstract}
Let $M$ be any $n$ dimensional smooth manifold  and $PM$ be the space of all smooth paths, then we showed that $PM$ is a smooth manifold modelled over a complete
normable space.
We discussed many geometric structure on Path spaces and its relation to ambient space.
\end{abstract}
\maketitle
\section{Introduction}

Let M be any smooth manifold of dimension $n$. We define path space $PM$ over $M$ as,$PM=\{\gamma:[0,1]\to M : \gamma \text{ smooth }\}$.

In Section 2, we showed that $PM$ is a smooth manifold modelled over a complete normable space (Note that: We are not saying Banach space, as we can not
specify norm). In \cite{Michor}, Chapter 10, Theorem 10.4, Michor showed that path spaces with $\text{ fine }-\mathfrak{D}$ ($F\mathfrak{D}$) topology is $C_c^\infty$ manifold modelled on nuclear and
dually nuclear locally convex vector space. Our manifold structure on $PM$ is easy to work as it is modelled over complete normable space. ODE has unique existence and as
result unique geodesic and other routine thing can be seen in path space.

In section 3, we will partially follow \cite{Eliasson}. We defined connection and curvature on $PM$ and its relation with ambient manifold's connection and curvature.
In theorem ~\ref{theo:completeness} we proved if M is complete with respect to a connection $\nabla$ then $PM$ is complete with respect to induced connection
which we defined.
\section{Path Space as a Complete Normable Space}\label{s:normable}
Let $M$ be a smooth manifold, and $PM$ is path space, For $\gamma \in PM$ define
$X_{\gamma }:=\{X:\gamma([0,1])\to TM,\text{  }  \text{smooth vector field along } \gamma \}$. Then $X_\gamma $ is a vector space.
Next,  we will define a structure of complete normable space to $X_\gamma $.

Case-I: If $\gamma $ contained in chart domain $(U,\phi)$ of $M$.
 Identifying $X_{\gamma} $ as the set $\{\mu :[0,1]\to \mathbb R^n , \text{ smooth } \}$,
we define
\begin{equation}\|\mu\|= \text{ Sup }_{t\in[0,1]} \text{Max}(|\mu(t)|,|\mu'(t)|)\label{norm}
\end{equation}
where $|.|$ is usual euclidean norm. We claim that equivalence class of this norm does not depend upon the choice of chart $(U,\phi)$. For if there are two coordinate system $(U,\phi)$ and $(V,\psi)$, such
that $\gamma([0,1])\subset U\cap V$. Without loss of generality, We can assume that $V=U$. Let $u:[0,1] \to \mathbb R^n$ is representation of $X\in X_\gamma$ via coordinate map $\phi$. And
Let $ v :[0,1] \to \mathbb R^n$ is representation of $X\in X_\gamma$ via coordinate map $\psi $ . If $\lambda (=\phi o\psi^{-1})$ is transition map, then for each $t\in [0,1]$
\begin{equation}v(t)= d\lambda_{\phi(\gamma(t))} u(t)\text{ and this gives } \label{v and u relation}\end{equation}
\begin{equation}|v(t)|\leq |u(t)|. |d\lambda_{\phi(\gamma(t))}|\label{first comprasion}
\end{equation}
 Now $v'(t)= d\lambda _{\phi(\gamma(t))} u'(t) + d^2\lambda_{\phi(\gamma(t))}(d\phi_{\gamma(t)}(\gamma'(t),u(t)))$. This gives,
\begin{equation}|v'(t)|\leq |u'(t)|.|d\lambda_{\phi(\gamma(t))}|+|u(t)|.|d\phi_{\gamma(t)}(\gamma'(t))|.|d^2\lambda_{\phi(\gamma(t))}|\label{second comprasion}
\end{equation}
From equations \ref{first comprasion} and \ref{second comprasion}
\begin{equation}\text{ max } (|v(t)|,|v'(t)|)\leq \text{ max }(|u(t)|. |d\lambda_{\phi(\gamma(t))}|,|u'(t)|.|d\lambda_{\phi(\gamma(t))}|+|u(t)|.|d\phi_{\gamma(t)}(\gamma'(t))|.|d^2\lambda_{\phi(\gamma(t))})|\label{third comprasion}\end{equation}
Observe $\text{ Max} |d\lambda\text{ on }\phi(\gamma[0,1])|$ and $\text{ Max}|d^2\lambda\text{ on }\phi(\gamma[0,1])| $ are constant which depend upon the charts and independent of $u$ and $\gamma $.
From equations \ref{norm}, \ref{third comprasion} we have $\|v\|\leq c\|u\|$, for some constant $c$, which will depend upon only coordinate map. Same way we have $\|u\|\leq c'\|v\|$, for some constant $c'$.  Hence for any $X\in X_\gamma$ such that $\gamma $ lies in one coordinate chart, we defined norm for every representation and each norm is equivalent.

Case -II: General case: When $\gamma$ does not lie in single coordinate chart then, we cover Im$\gamma$ by finitely many charts $U_1,\cdots,U_n$ and then partition $[0,1]$ in compact subset $K_1,\cdots,K_n$ with $\gamma((K_i)\subset U_i$. We take the norm on $K_i$ associated to $U_i$ as in Case-I and then take their sum.  These norms will be equivalent.

In this way we defined a structure of complete normable space on $X_\gamma$.  Now we will move to get an
injective map from some open set of $X_\gamma$ to $PM$,  then we will prove this map becomes coordinate map.
We can identify $X_\gamma $ as $\Gamma (\gamma^*(TM))$, where $\gamma^*(TM)$ is a pull back
vector bundle over $[0,1]$ by $\gamma$ and $\Gamma(\gamma^*(TM)$ is set of all smooth
section $f:[0,1] \to \gamma^*(TM)$ [Note: We are talking about pull back vector bundle over a corner manifold,
for this we are following \cite{Michor}]. For each $\gamma\in PM$,
we have one $\widetilde{M}$ Riemannian manifold, submanifold of $M$,
such that $\gamma([0,1])\in \widetilde{M}$ and we have $\gamma^*(T\widetilde{M})=\gamma^*(TM)$.
 Also We have following well known theorm of Riemannian geometry:
\begin{theorem}\label{thm:riemannian geometry thm}
 Let $M$ be a Riemannian manifold. Then there exits $\epsilon >0$, denote by $O_{\epsilon}$, the open $\epsilon $ neighborhood
of $0-$ section of $\tau: TM \to M$, that is $O_\epsilon =\{\zeta\in TM:\|\zeta\|<\epsilon\}$ such that
\begin{equation}
(\tau, exp): O_{\epsilon} \to M\times M\label{riemannian geometry thm}
\end{equation}
is a diffeomorphism  onto an open neighborhood of the diagonal of $M\times M$.
\end{theorem}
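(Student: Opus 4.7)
The plan is to prove Theorem \ref{thm:riemannian geometry thm} via the inverse function theorem applied along the zero section, followed by a compactness/openness argument to obtain a uniform tubular neighborhood.

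First I would compute the differential of the map $F:=(\tau,\exp)$ at a point $0_p$ of the zero section. Using the canonical splitting $T_{0_p}(TM)\cong T_pM\oplus T_pM$ into horizontal and vertical subspaces, observe that $\tau$ restricted to the zero section is the identity (so $d\tau$ kills the vertical part and is the identity on the horizontal part) and that $\exp$ restricted to $T_pM$ has derivative $\mathrm{id}_{T_pM}$ at $0_p$ (while on the zero section $\exp$ coincides with $\tau$). In block form this gives
\begin{equation}
dF|_{0_p}=\begin{pmatrix} I & 0 \\ I & I\end{pmatrix}:T_pM\oplus T_pM\longrightarrow T_pM\oplus T_pM,
\end{equation}
which is a linear isomorphism. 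The inverse function theorem therefore yields, for every $p\in M$, an open neighborhood $W_p$ of $0_p$ in $TM$ on which $F$ is a diffeomorphism onto an open neighborhood of $(p,p)$ in $M\times M$.

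Next I would upgrade these local diffeomorphisms to a neighborhood of the entire zero section. For each $p$ choose $\epsilon_p>0$ and an open $U_p\ni p$ such that the set $\{\zeta\in TM:\tau(\zeta)\in U_p,\ \|\zeta\|<\epsilon_p\}$ lies inside $W_p$. Covering $M$ and taking a locally finite refinement produces a positive continuous function $\epsilon:M\to\mathbb{R}^+$ so that $F$ is a local diffeomorphism on $O_\epsilon=\{\zeta:\|\zeta\|<\epsilon(\tau\zeta)\}$. In the present paper this theorem is applied only along the compact image $\gamma([0,1])$, so one may in fact take $\epsilon$ to be the positive constant $\min_{t\in[0,1]}\epsilon(\gamma(t))$, which is the form stated.

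The remaining, and main, point is to establish global injectivity of $F$ on $O_\epsilon$ after possibly shrinking $\epsilon$. The argument is by contradiction: if no such $\epsilon$ existed, one could produce sequences $\zeta_n,\eta_n\in TM$ with $\|\zeta_n\|,\|\eta_n\|\to 0$, $\zeta_n\neq\eta_n$, and $F(\zeta_n)=F(\eta_n)$. Since both $\zeta_n$ and $\eta_n$ would then accumulate on the same zero vector $0_q$ (using compactness of the relevant piece of the zero section, e.g.\ over $\gamma([0,1])$), this contradicts the local injectivity of $F$ near $0_q$ granted by the inverse function theorem step. Once injectivity and the local diffeomorphism property are both in hand, $F$ is a diffeomorphism of $O_\epsilon$ onto its image, which is manifestly an open neighborhood of the diagonal since $F$ maps the zero section onto the diagonal.

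The step I expect to be the main obstacle is precisely this uniform-injectivity argument: the inverse function theorem is purely local, so one must do a little compactness work to produce a single $\epsilon$ that serves every point of the relevant portion of the zero section. Once that is done, the rest is bookkeeping about the horizontal/vertical decomposition and the inverse function theorem.
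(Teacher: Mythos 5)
The paper does not actually prove this statement---it is quoted as a ``well known theorem of Riemannian geometry'' and used as a black box---so there is no in-paper argument to compare yours against. Your proof is the standard one and is correct: the block form $\bigl(\begin{smallmatrix} I & 0 \\ I & I\end{smallmatrix}\bigr)$ of $dF$ along the zero section, the inverse function theorem, and the sequential-compactness argument for uniform injectivity are all in order, and the image is an open neighborhood of the diagonal exactly as you say. Your parenthetical caveat is also the right thing to flag: with a single constant $\epsilon$ the statement can fail on a non-compact $M$ (the injectivity radius may decay to zero), so the honest general statement uses a continuous positive function $\epsilon(p)$; since the paper only ever invokes the theorem along the compact image $\gamma([0,1])$ inside the auxiliary submanifold $\widetilde M$, your constant $\min_{t\in[0,1]}\epsilon(\gamma(t))$ is exactly what is needed there.
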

Applying above theorem for $\widetilde{M}$, We can get $O_\gamma$ some open set in
$\gamma^* (T\widetilde{M})$, containing zero section such that
$\pi^*\gamma(O_\gamma)\subset O_\epsilon$, Where
$\pi^*\gamma :\gamma^* (T\widetilde{M})\to (T\widetilde{M})$ is inclusion. We can identify $\pi^*\gamma(O_\gamma)$ as $O_\gamma$. Also from ~\ref{riemannian geometry thm} we have
\begin{equation}
exp_\gamma:=exp:O_\gamma \to exp(O_\gamma)\subset M\label{exp map}\end{equation} is diffeomorphism.
 If $O_\gamma$ is
open in $TM$, then $\Gamma(O_\gamma)$ is open in $X_\gamma$ with topology of complete normable structure. Keeping in mind above identification,for $\gamma \in PM$, define
\begin{eqnarray}
&&Exp_\gamma:\Gamma(O_\gamma)\to PM\text{ by}\nonumber\\
&&X\mapsto exp_{\gamma}(X);\text{ }X(t)= exp_{\gamma(t)}X(t)\label{coordiante chart}
 \end{eqnarray}
If $U_\gamma = Exp_\gamma (\Gamma(O_\gamma))$, we claim
$\{(U_\gamma, Exp_\gamma)\}$ is coordinate chart. Injectivity is obivious here.
Let $\gamma_1$ and $\gamma_2$ be two path in $PM$, such that if $Exp_{\gamma_1} (\Gamma(O_{\gamma_1}))= U_{\gamma_1}$ and
$Exp_{\gamma_2} (\Gamma(O_{\gamma_2}))= U_{\gamma_2}$ and $U_{\gamma_1}\cap U_{\gamma_2}\neq \phi$
 Then we have a map:
 $$Exp_{\gamma_1}^{-1}(U_{\gamma_1}\cap U_{\gamma_2})\subset \Gamma(O_{\gamma_1})\to Exp_{\gamma_2}^{-1}(U_{\gamma_1}\cap U_{\gamma_2})\subset \Gamma(O_{\gamma_2})$$
 $$X\mapsto Exp_{\gamma_2}^{-1}oExp_{\gamma_1}(X)$$
 We will show above map is smooth map from open set of complete normable space $X_{\gamma_1}$ to open set of $X_{\gamma_2}$.
 Without loss of generality we can assume
 $Exp_{\gamma_1}^{-1}(U_{\gamma_1}\cap U_{\gamma_2})= \Gamma(O_{\gamma_1})$  and  $Exp_{\gamma_2}^{-1}(U_{\gamma_1}\cap U_{\gamma_2})= \Gamma(O_{\gamma_2})$.

\begin{lemma}\label{lem:smoothness}
 Let $V$ and $W$ be vector bundle over $[0,1]$ (vector bundle over corner manifold as in \cite{Michor})
and $p: U\subset V\to W$ be a smooth map which takes points in the fiber of $V$ over a point $t\in[0,1]$ into
points in the fiber of $W$ over the same point $t$.  Define a non linear operator $P:\widetilde{U}\subset \Gamma(V)\to \Gamma(W)$ by
 $P(f)= p o f$
with the complete normable structure on $\Gamma(V)$ and $\Gamma(W)$ as before,  $P$ is  smooth map.
\end{lemma}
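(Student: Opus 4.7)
The plan is to reduce to a trivialised Euclidean setting and then establish Fr\'echet differentiability of all orders by direct computation, using Taylor's theorem in the fibre variable. First, cover $[0,1]$ by finitely many closed subintervals $K_1,\ldots,K_r$ on each of which both $V$ and $W$ trivialise. Because the complete normable structures on $\Gamma(V)$ and $\Gamma(W)$ were built piecewise in Section~\ref{s:normable} (Case-II), it suffices to handle one piece at a time. So, without loss of generality, take $V=[0,1]\times\mbr^k$ and $W=[0,1]\times\mbr^{\ell}$; then a section of $V$ is just a smooth map $f:[0,1]\to\mbr^k$, the fibre-preserving map becomes a smooth $p:U\subset[0,1]\times\mbr^k\to\mbr^{\ell}$, and $P(f)(t)=p(t,f(t))$. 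The norm on $\Gamma(V)$ is then the $C^1$-sup norm of equation~(\ref{norm}).

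Second I would guess and verify the Fr\'echet derivative. The natural candidate at $f$ is $DP(f)[g](t)=\partial_2 p(t,f(t))\,g(t)$, with $\partial_2$ the fibre-derivative of $p$. Taylor's theorem with integral remainder in the second argument gives
\[ p(t,f(t)+g(t)) - p(t,f(t)) - \partial_2 p(t,f(t))\,g(t) \;=\; R(t), \qquad |R(t)|\leq C\,|g(t)|^2, \]
uniformly on a compact neighbourhood of the graph of $f$. The principal bookkeeping obstacle is that the norm is a $C^1$-sup norm, so one must also control $R'(t)$: using the chain-rule identity $\partial_t p(t,f(t))=\partial_1 p+\partial_2 p\cdot f'$ and its analogue at $f+g$, the derivative $R'(t)$ decomposes into three second-order pieces, namely $\partial_1 p(t,f+g)-\partial_1 p(t,f)-\partial_1\partial_2 p(t,f)\cdot g$, $\,[\partial_2 p(t,f+g)-\partial_2 p(t,f)-\partial_2^2 p(t,f)\cdot g]\cdot f'$, and $[\partial_2 p(t,f+g)-\partial_2 p(t,f)]\cdot g'$. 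Each is $O(|g|^2+|g|\,|g'|)$ on the same neighbourhood by virtue of uniform bounds on $\partial_1 p,\partial_2 p,\partial_1\partial_2 p,\partial_2^2 p$, hence $\sup_t|R'(t)|\leq C'\|g\|^2$. Combined, these estimates give $\|P(f+g)-P(f)-DP(f)g\|=o(\|g\|)$, and the same calculation shows $g\mapsto DP(f)g$ is a bounded linear operator.

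Finally I would iterate. The formula above exhibits $DP$ as an operator of exactly the same form as $P$, now associated to the smooth fibre-preserving map $(t,v)\mapsto\partial_2 p(t,v)$ from $V$ into the vector bundle $\mathrm{Hom}(V,W)$ over $[0,1]$. By induction, $D^k P$ is the operator induced by the $k$-th fibre derivative $\partial_2^k p$, valued in $\mathrm{Hom}(V^{\otimes k},W)$; each is continuous by the first step, so $P\in C^\infty$. The only non-routine ingredient is the $C^1$-norm bookkeeping described above; once it is carried out carefully, the induction runs mechanically.
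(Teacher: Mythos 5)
Your proof is correct, and it is genuinely different from --- and more self-contained than --- the one in the paper. The paper's proof is essentially a citation: it writes $[P(f)](t)=p(t,f(t))$ in fibre coordinates, quotes Hamilton's formula $[DP(f)h](t)=D_yp(t,f(t))h(t)$ and the argument of I.3.1.7 of \cite{Hamilton} to conclude that $P$ is $C^\infty$ in the G\^{a}teaux sense, and then passes to Fr\'{e}chet smoothness by remarking that each G\^{a}teaux derivative is linear and bounded. You instead prove the Fr\'{e}chet estimates directly: Taylor's theorem with integral remainder gives $|R(t)|\le C|g(t)|^2$, your three-term decomposition of $R'(t)$ gives $\sup_t|R'(t)|\le C'\|g\|^2$, and together these yield $\|P(f+g)-P(f)-DP(f)g\|=O(\|g\|^2)$ in the $C^1$-sup norm \eqref{norm}; the induction then runs through the composition operator induced by $\partial_2 p$ with values in $\mathrm{Hom}(V,W)$. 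What your route buys is exactly the point the paper elides: G\^{a}teaux smoothness with pointwise bounded derivatives does not by itself give Fr\'{e}chet $C^1$ (one needs continuity of $f\mapsto DP(f)$ into the operator-norm topology), and Hamilton's argument is written for the full $C^\infty$ Fr\'{e}chet topology on sections rather than for the $C^1$-type norm used here --- a mismatch the paper's own bracketed note acknowledges but does not repair. Your explicit control of $R'(t)$, the one term this particular norm forces you to track, is what closes that gap. Two small points deserve a sentence in a final write-up: for $\|g\|$ small the graphs of $f+sg$, $0\le s\le 1$, stay in a fixed compact subset of $U$ (this is what makes your constants uniform), and the reduction to a single trivialising interval is legitimate because the Case-II norm is a finite sum of the $K_i$-norms and restriction to each $K_i$ is continuous linear.
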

\begin{proof} If $y$ is coordinate chart in the finber of $V$, then $P$ can be written as $[P(f)](t)= p(t,f(t))$ and follwoing \cite{Hamilton}
$[DP(f)h](t)= D_y p(t,f(t)) h(t)$
 and using same line as in I.3.1.7 of \cite{Hamilton} ,It follows that $P$ is $C^\infty$,$G\hat{a}teaux$ [Note: In refrence\cite{Hamilton} in $\Gamma(V)$, Frechet space structure is not same as ours, still proof
is same]. Also as each $G\hat{a}teaux$ derivative is linear and bounded at each point of $\widetilde{U}$, we have P is frechet smooth in complete normable space.\end{proof}
Now with $p= exp_{\gamma_2}^{-1} o exp_{\gamma_1}$, $V= X_{\gamma_1}$ and $W= X_{\gamma_2}$ as vector bundle over $[0,1]$. From \ref{exp map}, $p$ is smooth
from $O_{\gamma_1}\to O_{\gamma_2}$. Also $P(X)= Exp_{\gamma_2}^{-1}oExp_{\gamma_1}(X)= poX$. By lemma \ref{lem:smoothness}, $P$ is smooth map,
which in turn shows that $PM$ is a smooth manifold modelled on complete normable space.

As in Banach manifold, here also we have Tangent space and Tangent bundle.
Tangent space at point $\gamma\in PM$ will be $X_\gamma$ and Tangent bundle is $\coprod_\gamma X_\gamma= T(PM)$
\section{Vector Field, Connection and Curvature}\label{s:vectcon}
We define a path on path space by a continuous map
\begin{equation}
\Gamma: [0,1]\rightarrow PM;  \text{  }s\mapsto \Gamma(s)\in PM.\label{path}
\end{equation}
Thus for each $s\in [0,1]$ we have a path given by
$$\Gamma(s)(t):=\Gamma_s(t):=\Gamma(s,t),$$
A vector field on path space is a smooth map
\begin{equation}
V: PM\to T(PM)\label{vector field}
\end{equation}
Tangent vector field along this path $\Gamma$ is given by
\begin{equation}
\Gamma':[0,1]\to T(PM);\text{ }s\mapsto \frac{\partial}{\partial s} \Gamma(s,t)\label{tgt vectorfield}
\end{equation}
Hence if $V$ is a vector field along $\Gamma$ then $V(s)$ is vector field along $\Gamma(s)$
and also if we define for each $t$
\begin{equation}
ev_t:PM\to M,\label{evaluation}
\end{equation}
 which is given by $ev_t(\gamma) =\gamma(t)$. This is a
smooth map and $d(ev_{t_0})(\gamma)(V)= V(\gamma(t_0))$. So we can identify vector field along $\Gamma$ as vector field along each path with $V(s,t)\in T_{\Gamma(s,t)}M$.

If K is a vector field on $M$, then we have $\widetilde{K}$ vector field on $PM$ defined by
\begin{equation}
\widetilde{K}(\gamma)(t)= K(\gamma(t))
\end{equation}
but conversely we can not define. Now if $K_1$ and $K_2$ are vector field on $PM$  and $M$ has a connection
\begin{equation}
\nabla:\chi(M)\times \chi(M)\to \chi(M),
\end{equation}
 and we want to define induced covariant connection $\widetilde{\nabla}$ on $PM$.

Let $X$ and $B$ be manifold, $\pi :X\to B $ is a vector bundle over base B,
then a connection on this bundle is a smooth map \begin{equation}\phi: T(B)\times_B X\to T(X)\end{equation}
such that for each $b\in B$ and $\xi \in T_b(B)$ and $x\in X$ with $\pi(x)=b$, then $\phi(\xi,x)\in T_x(X)$ and $d\pi_x (\phi (\xi,x))= \xi$.
We will denote this as $\phi(b)(\xi, x)$ for emphasing the value of base poin $b$.
Now let M has a connection $\phi: (TM)\times_M (TM)\to TM$, then for path space we have the following induced connection,
for each $t\in [0,1]$, define
$$\widetilde{\phi} : T(PM)\times_{PM} T(PM) \to T(PM)$$
such that for each $t\in [0,1]$, we have
\begin{equation}
\widetilde{\phi}(V,W)(\gamma)(t):= \phi(\gamma(t))(V(t),W(t))\label{connection on path space 1}
\end{equation}
with $V(t)\in T_{\gamma(t)} M$ and $W(t)\in T_{\gamma(t)}M$.\\
Now for $V$ and $W\in T_\gamma(PM)\sim X_\gamma$, covariant connection on $PM$ is given by
\begin{eqnarray} &&\widetilde{\nabla}(V,W)=d(W)_\gamma(V)-\widetilde{\phi}(V,W)(\gamma)\nonumber\\
 &&\widetilde{\nabla}(V,W)(\gamma)(t)= dW_{\gamma(t)}(V(t))-\phi(\gamma(t))(V(t),W(t))\label{conn}
 \end{eqnarray}
With this point-wise definition of covariant connection, it is obvious that
$\Gamma$ is a geodesic in $PM$ if and only if transrvrese path $\Gamma_t$
as defined in \eqref{trans} is a geodesic in $M$
for each $t \in[01,] $. Thus we make the following proposition
\begin{prop}\label{prop:initialcongeo}
For any given $\gamma\in PM$ and any vector $V\in T_{\gamma}PM$, there is a
unique geodesic $\Gamma:[0,b]\rightarrow PM$, such that
$\Gamma(0)=\gamma$ and $\Gamma'(0)=V$.
\end{prop}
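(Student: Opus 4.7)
The plan is to exploit the point-wise nature of the induced connection. By \eqref{conn}, the covariant derivative $\widetilde{\nabla}$ is defined point-wise in $t$, so a curve $\Gamma:[0,b]\to PM$ is a geodesic in $PM$ if and only if, for every $t\in[0,1]$, the transverse curve $s\mapsto \Gamma(s)(t)$ is a geodesic in $M$ with respect to $\nabla$—precisely the equivalence already recorded in the paragraph preceding the proposition. Thus it suffices to construct, for each $t$, a geodesic in $M$ starting at $\gamma(t)$ with initial velocity $V(t)$, and then to verify that these recombine into a smooth path in $PM$.

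The classical ODE existence and uniqueness theorem applied to the geodesic equation on $M$ yields, for each $t\in[0,1]$, a unique maximal geodesic $\sigma_t:[0,b_t)\to M$ with $\sigma_t(0)=\gamma(t)$ and $\sigma_t'(0)=V(t)$. Because $t\mapsto(\gamma(t),V(t))$ is smooth into $TM$, smooth dependence of ODE solutions on initial conditions and parameters assembles these into a smooth map $\Phi:\mathcal{D}\to M$, where $\mathcal{D}\subset\mathbb{R}\times[0,1]$ is an open neighborhood of $\{0\}\times[0,1]$ and $\Phi(s,t)=\sigma_t(s)$.

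The main obstacle is producing a single uniform existence time $b>0$ with $[0,b]\times[0,1]\subset\mathcal{D}$. This is where the compactness of $[0,1]$ is essential: since $\{0\}\times[0,1]$ is a compact subset of the open set $\mathcal{D}$, a standard tube-lemma argument produces such a $b>0$; equivalently, $t\mapsto b_t$ is positive and lower semicontinuous on a compact interval, hence bounded below. Setting $\Gamma(s)(t):=\Phi(s,t)$ for $(s,t)\in[0,b]\times[0,1]$, smoothness of $\Phi$ in $t$ places each $\Gamma(s)$ in $PM$, while joint smoothness of $\Phi$ shows $s\mapsto \Gamma(s)$ is smooth as a curve into $PM$. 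This last point is checked in the chart $Exp_\gamma$ of Section~\ref{s:normable}: after possibly shrinking $b$ so that the section $t\mapsto \exp_{\gamma(t)}^{-1}(\Gamma(s)(t))$ lies in $\Gamma(O_\gamma)$ for all $s\in[0,b]$, Lemma~\ref{lem:smoothness} applied to the bundle map induced by $\exp^{-1}$ gives the required smoothness.

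By construction $\Gamma(0)=\gamma$, $\Gamma'(0)=V$, and every transverse curve $\Gamma_t$ is a $\nabla$-geodesic in $M$, so $\Gamma$ is a $\widetilde{\nabla}$-geodesic in $PM$. For uniqueness, any other geodesic $\tilde{\Gamma}:[0,b]\to PM$ with $\tilde{\Gamma}(0)=\gamma$ and $\tilde{\Gamma}'(0)=V$ yields transverse geodesics $\tilde{\Gamma}_t$ in $M$ sharing initial position and velocity with $\sigma_t$; uniqueness of $\sigma_t$ forces $\tilde{\Gamma}_t=\sigma_t$ for every $t$, i.e.\ $\tilde{\Gamma}=\Gamma$.
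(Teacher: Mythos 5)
Your proposal follows essentially the same route as the paper's proof: reduce to the transverse curves $\Gamma_t$ via the point-wise definition of $\widetilde{\nabla}$, solve the geodesic ODE for each $t$, and use compactness of $[0,1]$ to extract a uniform existence interval. You supply details the paper leaves implicit (smooth dependence on initial conditions, the tube-lemma step, and the chart-level smoothness check via Lemma~\ref{lem:smoothness}), but the underlying argument is the same.
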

\begin{proof}Let $V\in T_{\gamma}PM$ is given by $V(t)\in T_{\gamma(t)}M,$ for each $t\in [0,1]$.
Now for each $t\in[0,1]$ we have the following initial conditions
$\Gamma_{t}(0)=\gamma(t)$ and $\Gamma'_{t}(0)=V(t)$. With thsese initial conditions, for each $t\in[0,1]$ we have a unique geodesic $\Gamma_t$ defined on small 
interval say $[0,\epsilon(t)]$ where $\epsilon(t)>0$. As $[0,1]$ is compact,we have
a unique geodesic $\Gamma$ on $PM$ on small interval with initial conditions $\Gamma(0)=\gamma$ and $\Gamma'(0)=V$.
\end{proof}
From proposition~\ref{prop:initialcongeo} it follows that:
\begin{theorem}\label{theo:completeness}
If $M$ is complete with respect to a connection $\nabla$ then $PM$ is
complete with respect to induced connection $\widetilde{\nabla}$
\end{theorem}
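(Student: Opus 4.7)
The plan is to leverage the pointwise characterization noted right before Proposition~\ref{prop:initialcongeo}: a path $\Gamma$ in $PM$ is a geodesic iff each transverse curve $\Gamma_t := \Gamma(\cdot,t)$ is a geodesic in $M$. With this in hand, the argument becomes a contradiction argument paired with a smooth-dependence-on-initial-conditions result for the geodesic ODE on $M$.

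More precisely, suppose $\Gamma:[0,b)\rightarrow PM$ is a maximal geodesic with $b<\infty$, starting from $\gamma=\Gamma(0)$ with $\Gamma'(0)=V\in T_\gamma PM$. The first step is to build a candidate extension pointwise: for each $t\in[0,1]$ the transverse curve $\Gamma_t$ is a geodesic in $M$ with initial data $(\gamma(t),V(t))$, and since $M$ is $\nabla$-complete each $\Gamma_t$ extends uniquely to a geodesic $\widehat{\Gamma}_t:[0,\infty)\to M$. Set $\widehat{\Gamma}(s,t):=\widehat{\Gamma}_t(s)$ for $(s,t)\in[0,\infty)\times[0,1]$.

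The second step is to verify that $\widehat{\Gamma}(s,\cdot)$ actually lies in $PM$ and that $s\mapsto\widehat{\Gamma}(s,\cdot)$ is smooth into $PM$ (with the complete-normable structure built in Section~\ref{s:normable}). The initial data $t\mapsto (\gamma(t),V(t))$ is a smooth curve in $TM$, so smooth dependence on initial conditions for the geodesic spray gives that $\widehat{\Gamma}$ is jointly smooth on $[0,\infty)\times[0,1]$; in particular $\widehat{\Gamma}(s,\cdot)$ is smooth in $t$, hence lies in $PM$. To see that $s\mapsto\widehat{\Gamma}(s,\cdot)$ is smooth into the normable manifold $PM$, pick any $s_0\in[0,\infty)$ and express the curve in a chart $Exp_{\widehat{\Gamma}(s_0,\cdot)}^{-1}$ as in~\eqref{coordiante chart}; joint smoothness of $\widehat{\Gamma}$ together with joint smoothness of its $t$-derivative (obtained by differentiating the geodesic flow in $t$) gives continuity of all $s$-derivatives of this chart representation in the norm~\eqref{norm}, thanks to compactness of $[0,1]$ and the fact that the norm only controls $\sup$'s of the section and its first $t$-derivative.

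The third step is to conclude. By uniqueness in Proposition~\ref{prop:initialcongeo}, $\widehat{\Gamma}$ agrees with $\Gamma$ on $[0,b)$: at each $s<b$ the two curves are geodesics in $PM$ sharing initial data at $s=0$. But $\widehat{\Gamma}$ is defined on all of $[0,\infty)$, contradicting maximality of $b$. Hence $b=\infty$, and since $\gamma$, $V$ were arbitrary this gives completeness of $\widetilde{\nabla}$.

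The main obstacle I expect is the verification in the second step that $s\mapsto\widehat{\Gamma}(s,\cdot)$ is not merely pointwise-in-$t$ smooth but smooth as a map into the normable manifold $PM$; one must check that smooth dependence on initial conditions yields uniform control (in $t\in[0,1]$) of both the value and the first $t$-derivative of the chart representation and its $s$-derivatives, which is precisely what the norm in \eqref{norm} requires. Everything else is then a routine packaging of the ODE argument.
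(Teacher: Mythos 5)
Your proposal is correct and follows essentially the same route the paper takes: the paper offers no separate proof, simply asserting that the theorem ``follows from Proposition~\ref{prop:initialcongeo}'' via the pointwise characterization of geodesics in $PM$ as $t$-parametrized families of geodesics in $M$, which is exactly the extension-by-completeness-of-$M$ argument you carry out. Your write-up actually supplies the details the paper leaves implicit --- in particular the verification that the pointwise-extended family $\widehat{\Gamma}$ is a smooth curve into the normable manifold $PM$ --- so it is, if anything, more complete than the original.
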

Similarly, we can define the curvature on $PM$. If $R$ is a curvature of a manifold $M$ then,
$\widetilde{R}$ curvature of $PM$ is defined as following:  For given $X$ and $Y\in \chi(PM)$, We have

\begin{eqnarray}
&&\widetilde{R}(X,Y):\chi(PM)\to \chi(PM)\nonumber\\
&&\widetilde{R}(X,Y)(Z)(\gamma)(t):= R(X(\gamma(t)), Y(\gamma(t)))(Z(\gamma(t)))\label{curvature}
\end{eqnarray}
Following, we have some weaker version of Hopf-Rinow theorem.
\begin{theorem}\label{thm: HopfRinow-1}
Let $M$ be any simply connected riemannian manifold with non positive sectional curvature , Then for given point $\gamma_1$ and $\gamma_2$ in connected component in $PM$ there always exits a path
space geodesic joining these two points.
\end{theorem}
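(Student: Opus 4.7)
The strategy is to transfer the classical Cartan--Hadamard theorem from $M$ to $PM$ using the pointwise characterization of geodesics in $(PM,\widetilde{\nabla})$ already observed in Section~\ref{s:vectcon}. Under the stated hypotheses --- together with completeness, which is implicit, since Cartan--Hadamard requires it and, conversely, it is then inherited by $PM$ via Theorem~\ref{theo:completeness} --- the exponential map $\exp_p : T_pM \to M$ is a diffeomorphism for every $p \in M$, so any two points of $M$ are joined by a unique geodesic. In particular $M$ is diffeomorphic to $\mbr^n$ and hence $PM$ is connected, so the connected-component hypothesis is automatic.

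Given $\gamma_1,\gamma_2\in PM$, I would first define the vector field $V$ along $\gamma_1$ by
\begin{equation*}
V(t) := \exp_{\gamma_1(t)}^{-1}\bigl(\gamma_2(t)\bigr)\in T_{\gamma_1(t)}M,\qquad t\in[0,1].
\end{equation*}
Smooth dependence of $\exp^{-1}$ on base point and endpoint, combined with smoothness of $\gamma_1$ and $\gamma_2$, shows that $V\in X_{\gamma_1}=T_{\gamma_1}PM$. I would then set $\Gamma(s)(t) := \exp_{\gamma_1(t)}(sV(t))$. For each $s$, smoothness of $t\mapsto \Gamma(s)(t)$ is immediate, giving $\Gamma(s)\in PM$; and the smoothness of $s\mapsto \Gamma(s)$ as a curve into $PM$ can be read off from the chart $Exp_{\gamma_1}$, since $\exp_p$ being a global diffeomorphism lets us take the chart domain $O_{\gamma_1}$ to be all of $\gamma_1^*(TM)$, whereupon $\Gamma$ corresponds to the straight line $s\mapsto sV$ in the complete normable space $X_{\gamma_1}$.

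Finally, I would invoke the pointwise formula \eqref{conn} for $\widetilde{\nabla}$: as observed just before Proposition~\ref{prop:initialcongeo}, a curve $\Gamma$ in $PM$ is a geodesic iff each transverse curve $s\mapsto \Gamma(s)(t)$ is a geodesic in $(M,\nabla)$. By construction the transverse curve at time $t$ is $s\mapsto \exp_{\gamma_1(t)}(sV(t))$, the $M$-geodesic through $\gamma_1(t)$ with initial velocity $V(t)$, so $\Gamma$ is a geodesic in $PM$; and plainly $\Gamma(0)=\gamma_1$, $\Gamma(1)=\gamma_2$. The principal obstacle is technical: promoting the pointwise-in-$t$ or pointwise-in-$s$ smoothness of $V$ and $\Gamma$ to smoothness into the complete normable spaces $X_{\gamma_1}$ and $PM$. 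Both reductions fall under Lemma~\ref{lem:smoothness} applied with $p$ taken to be $\exp$ or $\exp^{-1}$, and their \emph{global} availability --- which is what lets the chart $Exp_{\gamma_1}$ swallow the entire segment from $0$ to $V$ --- is exactly what simple-connectedness plus non-positive curvature provide through Cartan--Hadamard.
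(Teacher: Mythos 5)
Your proposal is correct and follows essentially the same route as the paper: apply Cartan--Hadamard pointwise to obtain $V(t)=\exp_{\gamma_1(t)}^{-1}(\gamma_2(t))$, set $\Gamma(s)(t)=\exp_{\gamma_1(t)}(sV(t))$, and conclude via the pointwise characterization of geodesics in $(PM,\widetilde{\nabla})$. You in fact supply more detail than the paper does (smoothness of $V$ as an element of $X_{\gamma_1}$ and of $\Gamma$ as a curve into $PM$ via Lemma~\ref{lem:smoothness}, and the implicit completeness hypothesis), which only strengthens the argument.
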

\begin{proof}
 If $\gamma_1$ and $\gamma_2\in PM$ then for $t\in [0,1],\gamma_1(t)$ and $\gamma_2(t)\in M$. By Cartan-Hadamard theorem 3.1,\cite{Docarmo} Page 149,  For each $t\in[0,1]$,
$exp_{\gamma_1(t)}: T_{\gamma_1(t)}\to M$ diffeormorphism, so for $\gamma_2(t)\in M$ there exists $V_t\in T_{\gamma_1(t)}$, such that $exp_{\gamma_1(t)}V_t= \gamma_2(t)$.
Now define, \begin{equation} \Gamma:[0,1]\to PM; \text{ by } \Gamma(s)(t):= exp_{\gamma_1(t)}s.V_t\label{geodesic eqn}
            \end{equation}
This is a geodesic, because for each $t\in[0,1]$, $\Gamma_t(s)$ is a geodesic in $M$.
\end{proof}

\begin{remark}
Above theorem is not true for positive sectional curvature manifold. For example, take $M= S^2$, then
there exists $\gamma_1$ and $\gamma_2$ in connected component of  $PS^2$(path space over $S^2$), which can not be joined by path space geodesic.
For this purpose, Let N and S are respectively north pole and south pole of $S^2$. See figure below.
Let $NAXBS$  and $NCYDS$ be part of great circle, Let $\gamma_1$ be smooth path  in $S^2$whose image is path $ANC$ on great circles such that speed at $N$ is zero.
\includegraphics[scale=0.55]{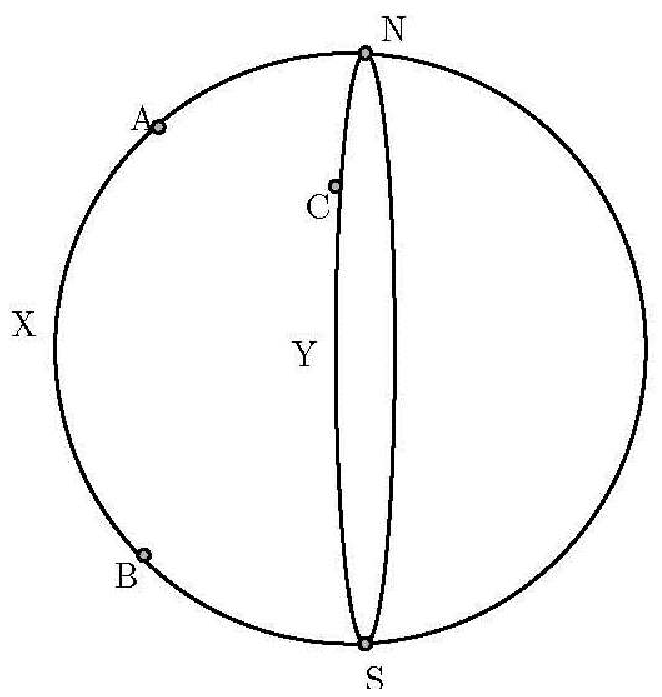}.Hence $\gamma_1$ has corner in $N$, but it is smooth (Obviously, it is not analytic). Now take $\gamma_2$ be smooth path in $S^2$ whose image is path $BSD $ on
great circles such that speed at $S$ is zero. Hence $\gamma_2$ has corner in $S$. These is a homotopy which between these two paths, hence these two points in $PS^2$ are
in connected component, but there does not exists any homotopy which is also a path space geodesic. For if path space geodesic $\Gamma(s)(t)$ exists joining
$\gamma_1$ and $\gamma_2$, as image of $\Gamma(s)(t)$ always lies on great circles $NAXBS$  and $NCYDS$, and then continuity of $\Gamma$ breaks and $N$ and $S$
\end{remark}

\medskip

{\bf Acknowledgments:}
I would like to thank Prof. Joesh Oesterl\`{e} profusely for the very useful discussions. I would also thank Dr. Rukmini Dey for useful discussions.


\end{document}